\documentclass[a4paper,12pt]{amsart}
\usepackage[utf8]{inputenc}
\usepackage[T1]{fontenc}
\usepackage[english]{babel}
\usepackage{amssymb,amsthm,times}
\usepackage{xcolor}
\usepackage{hyperref}
\usepackage{cancel}
\usepackage{graphicx}

\usepackage{float}
\usepackage{epstopdf}

\addtolength{\textwidth}{1cm} \addtolength{\hoffset}{-0.5cm}

\newcommand{\NormOnly}{\| \cdot \|}

\newcommand{\co}{\operatorname{co}}

\newcommand{\weak}{\hbox{weak}}
\newcommand{\ws}{weak\mbox{$^*$}}

\newcommand{\R}{\mathbb R}

\newcommand{\N}{\mathbb{N}}


\newtheorem{thm}{Theorem}
\newtheorem{lem}[thm]{Lemma}

\newtheorem{rem}[thm]{Remark}

\newtheorem{que}[thm]{Question}
\date{}

\title{\bf   Conic James' compactness Theorem}

\author{J. Orihuela}
\address{Departamento de Matem\'{a}ticas, Universidad de Murcia,
30100 Espinardo (Murcia), Spain} \email{joseori@um.es}

\thanks{Partially supported by Ministerio de Economía y Competitividad and FEDER project MTM2014-57838-C2-1-P; and Fundación Séneca  CARM, project 19368/PI/14.
}

\subjclass[2010]{46A50, 46B50}

\keywords{James' compactness theorem, weakly compact}

\begin{document}

\maketitle

\begin{abstract}
Our main result, which answers for arbitrary Bananch spaces a question posed in \cite{cas-ori-per} is the following:

{\it Let $E$ be a Banach space  and  $D$ be a weakly  compact subset of $E$ with $0\notin D$. If $A$ is a bounded subset of $E$ such that every $x^*\in E^*$ with  $x^*(D) >0$ attains its supremum on $A$, then $A$ is weakly relatively compact.}

\end{abstract}

\label{intro}
\section{Introduction}
The well-known James' theorem \cite{jam} claims that {\it a bounded closed convex set $C$ in a Banach space $E$ is weakly compact if and only if every $x^{\ast} \in E^{\ast}$ attains its supremum on $C$}. 

The aim of this paper is to complete the answer given in \cite{cas-ori-per} to the following question raised by Delbaen: 
\begin{que}
\label{que:Delbaen}
Let $E$ be a Banach space and $A$ be a bounded, convex and closed subset  of $E$ with $0\notin A$. Let us assume that for every 
$x^*\in E^*$ with $$\inf x^*(A)>0 ,$$
the infimum of $x^*$ on A is attained. Is the set  $A$  weakly compact?
\end{que}

For Banach spaces with a $\ws$ convex-block compact dual unit ball we gave a positive answer in \cite{cas-ori-per} called 
One-Sided James' Theorem. Thus in particular for Banach spaces with $\ws$ sequentially compact dual unit ball. We do not know if the result is true for arbitrary Banach spaces.
Our main theorem here gives a  positive answer if the hypothesis $\inf x^*(A)>0$ is replaced by requiring that
$x^*(D) >0$ for some fixed relatively weakly compact set of directions $D$, it reads as follows:
\begin{thm}
\label{MainConic}
Let $E$ be a Banach space  and  $D$ be a weakly  compact subset of $E$ with $0\notin D$. If $A$ is a bounded subset of $E$ such that every $x^*\in E^*$  with $x^*(D) >0$ attains its supremum on $A$, then $A$ is weakly relatively compact.
\end{thm}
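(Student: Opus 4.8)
\emph{Plan of proof.} The plan is to reduce to the classical convex setting and then run a James--Simons non-attainment construction that is confined to the cone of admissible functionals. First I would pass to $C:=\overline{\co}(A)$. Since $\sup_A x^* = \sup_C x^*$ and any point of $A$ at which $x^*$ attains its supremum already lies in $C$, the hypothesis transfers: every $x^*$ with $x^*(D)>0$ attains its supremum on the bounded, closed, convex set $C$; and by the Krein--\v{S}mulian theorem $A$ is weakly relatively compact if and only if $C$ is weakly compact. Write $\Gamma:=\{x^*\in E^*:\inf x^*(D)>0\}$. Because $D$ is weakly compact, for $x^*\in E^*$ the condition $x^*(D)\subseteq(0,\infty)$ coincides with $\inf x^*(D)>0$, and each evaluation $x^*\mapsto x^*(d)$ (with $d\in D\subseteq E$) is weak$^*$-continuous, so $\Gamma$ is an open convex cone. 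We may assume $\Gamma\neq\emptyset$, equivalently $0\notin\overline{\co}(D)$; then Hahn--Banach separation of $0$ from the weakly compact convex set $\overline{\co}(D)$ yields $x_0^*$ with $m_0:=\inf x_0^*(D)>0$, and for $0<m\le m_0$ the slice $S_{m,N}:=\{x^*:\inf x^*(D)\ge m,\ \|x^*\|\le N\}$ is a weak$^*$-compact convex subset of $\Gamma$. The decisive elementary fact is that membership in $\Gamma$ with a uniform bound survives arbitrary convex combinations: if $\inf x_n^*(D)\ge m$ for all $n$ and $\lambda_n\ge 0$, $\sum_n\lambda_n=1$, then $\sum_n\lambda_n x_n^*(d)\ge m$ for each $d\in D$, whence $\inf(\sum_n\lambda_n x_n^*)(D)\ge m>0$.

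Next I would argue by contradiction: suppose $C$ is not weakly compact. Then its weak$^*$-closure in the bidual meets $E^{**}\setminus E$, so there is $z^{**}\in\overline{C}^{\,w^*}\setminus E$. The engine is the standard James--Simons argument: using $z^{**}$ one extracts a uniformly bounded sequence $(x_n^*)$ and convex-block coefficients and invokes Simons' sup--limsup inequality to produce a block $x^*=\sum_n\lambda_n x_n^*$ that fails to attain its supremum on $C$ (the would-be maximiser being the external element $z^{**}\notin E$). In the classical theorem the $x_n^*$ range freely over $B_{E^*}$; here I must instead produce them inside $\Gamma$ with a common lower bound $\inf x_n^*(D)\ge m>0$, for then the elementary fact above guarantees $x^*\in\Gamma$, and the non-attainment of $x^*$ contradicts the hypothesis, forcing $C$ to be weakly compact.

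The main obstacle is precisely this confinement of the construction to the cone. One cannot simply add a large multiple of $x_0^*$ to a non-attaining functional to enforce positivity on $D$, because adding a functional may create a maximiser and restore attainment. Instead the separating functionals furnished by the James--Simons scheme must themselves be drawn from $\Gamma$ with a uniform positive infimum on $D$; concretely I would anchor the construction at $x_0^*$ and run it at the scale $\delta=m_0/(2\sup_{d\in D}\|d\|)$, producing functionals of the form $x_0^*+\delta w^*$ with $\|w^*\|\le 1$, which automatically satisfy $\inf_D(x_0^*+\delta w^*)\ge m_0/2>0$, and whose non-attainment on $C$ must then be verified directly. This is where weak compactness of $D$ is essential rather than cosmetic: it makes $\Gamma$ an open cone whose slices $S_{m,N}$ are weak$^*$-compact and convex, so the weak$^*$-limits and convex-block limits required by Simons' inequality stay inside $\Gamma$ (the directions $d\in D$ remain in $E$ and the infima over $D$ are attained and stable under these limiting operations). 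Reconciling the quantitative gap needed for non-attainment with the room available inside the open cone $\Gamma$---that is, showing the James--Simons construction can be performed using only functionals uniformly positive on $D$---is the crux; once it is carried out, the resulting $x^*\in\Gamma$ with unattained supremum on $C$ contradicts the hypothesis and completes the proof.
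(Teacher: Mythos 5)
Your reduction to $C=\overline{\co(A)}$ and the observation that convex combinations preserve a uniform lower bound on $D$ are both correct, and you have correctly located the difficulty: the James--Simons construction must be carried out so that the resulting non-attaining functional is positive on $D$. But that is exactly the step you do not carry out. You write that reconciling the quantitative gap needed for non-attainment with the room available inside the cone $\Gamma$ ``is the crux; once it is carried out\dots'' --- so the proposal stops precisely where the proof has to begin. The specific form you propose, $x_0^*+\delta w^*$ with $\|w^*\|\le 1$ and $\delta=m_0/(2\sup_{d\in D}\|d\|)$, does guarantee membership in $\Gamma$, but it gives no handle on non-attainment: the Pryce--Simons machinery does not let you prescribe the shape of the non-attaining functional in advance, and nothing in your sketch explains why a functional of that particular form should fail to attain its supremum on $C$.

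The paper's key construction (Theorem~\ref{ori}) resolves this differently. The non-attaining functional is produced as a \emph{difference} $g_0^*-\tilde g^*$ of two convex blocks of one and the same sequence $(x_n^*)\subset B_{E^*}$; positivity on $D$ is obtained not from a uniform lower bound $\inf x_n^*(D)\ge m$ propagated through convex combinations, but from the fact that, by Goldstine and Mackey--Arens, the $x_n^*$ can be chosen to converge to a separating functional $x^{***}$ \emph{uniformly on the weakly compact set} $D$, and then rescaled ($\hat x_n^*:=c_n x_n^*$ with $c_n\to 1$) so that $x_n^*(d)>x^{***}(d)$ for every $d\in D$. Consequently every block $g_i^*$ lies strictly above $x^{***}$ on $D$, while every $\sigma(E^*,E)$-cluster point $\tilde g^*$ agrees with $x^{***}$ on $D$, whence $(g_0^*-\tilde g^*)(d)>0$ for all $d\in D$. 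This is where the weak compactness of $D$ is genuinely used --- uniform convergence on $D$ via the Mackey topology --- rather than merely to make $\Gamma$ an open cone with weak$^*$-compact slices as in your sketch. Since your proposal neither performs this construction nor offers a working substitute for it, it has a genuine gap at the decisive step.
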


This result answers a question posed in \cite{cas-ori-per} where the same result is proved for Banach spaces with a $\ws$ convex-block compact dual unit ball. Let us remark that Delbaen's problem was motivated by some questions on risk measures in the framework of financial mathematics. Applications
and extensions of James' theorem in this field can be found for instance  in \cite{ori-rui1} and \cite{ori-rui2}. A general theorem for level sets of functions is in \cite{ori-rui1} and \cite{sai}. We very much hope other applications will come soon. An uptodate account on James' compactness theorem can be found in \cite{cas-ori-rui}

\subsection{Notation and terminology}

Most of our notation
and terminology are standard and can be
found in our standard references for Banach
spaces~\cite{fab-haj-mont-ziz}.  

Unless otherwise stated, $E$ will denote a Banach space with the norm $\NormOnly$. Given a subset~$S$
of a vector space, we write $\co{(S)}$,  to denote, respectively, its convex hull. If $(E,\NormOnly)$ is a normed space then $E^*$ denotes its topological dual. If $S$ is a subset of $E^*$, then $\sigma(E,S)$ denotes the topology of pointwise
convergence on $S$. Dually, if $S$ is a subset of $E$, then
$\sigma(E^*,S)$ is the topology for $E^*$ of pointwise convergence
on $S$. In particular $\sigma(E,E^*)$ and $\sigma(E^*,E)$ are the
weak ($\omega$)\index{weak topology}\index{topology!weak} and weak$^*$ ($\omega^{\ast}$)\index{weak$^*$ topology} topologies respectively.  

Given $x^{*}\in E^{*}$ and $x\in
E$, we write $\langle x^{*}, x \rangle = \langle x, x^{*} \rangle = x^{*}(x)$ for the
evaluation of~$x^{*}$ at~$x$. If $x\in E$ and $\delta >0$ we denote by $B(x,\delta)$   (or $B[x,\delta]$) the open (resp. closed) ball centered at $x$ of radius $\delta$. To simplify, we will simply write $B_{E}:=B[0,1]$; and the unit sphere \mbox{$\{x\in E\colon \|x\|=1\}$} will be denoted by $S_E$. An element $x^{\ast} \in E^{\ast}$ is \emph{norm-attaining} if there is $x \in B_{E}$ with $x^{\ast}(x) = \| x^{\ast}\|$. The set of norm-attaining functionals of $E$ is normally denoted by $NA(E)$.

If $(x_n)$ is a  bounded sequence in the Banach space $E$ we denote by
$$\hbox{co}_\sigma= \{\sum_{n=1}^\infty \xi_n x_n: \xi_n\geq 0 \hbox{ and } \sum_{n=1}^\infty \xi_n =1\} $$
the $\sigma$-convex hull of the sequence $(x_n)$

\section{The separable case}
In this section we present a proof of our main result for the separable case. Of course it follows form our results in \cite{cas-ori-per}.
Nevertheless it should be of interest for deeply understanding of matters.
We are going to apply the following result, see Theorem 3 in \cite{ori-rui2} or Corollary 10.6 in \cite{cas-ori-rui}

\begin{thm}[\textbf{Inf-liminf theorem in } $\mathbf{\mathbb{R}^X}$]\label{slst}
Assume that $X$ is a nonempty set, $\{f_n\}_{n\ge
1}$ is a pointwise bounded sequence in $\mathbb{R}^X$ and $Y$ is a
subset of $X$ with the property
$$
\hbox{for every } g \in \mathrm{co}_{\sigma_p}\{f_n\colon n \ge 1\}
\hbox{ there exists } y\in Y \hbox{ with } g (y)=\inf_X(g).
$$
where
$$
{\rm co}_{\sigma_p}\{f_n\colon\ n \ge 1\}:=\left\{ \sum_{n=1}^\infty \lambda_n f_n : \hbox{ for all }n\ge 1, \ \lambda_n\ge 0  \hbox{ and } \sum_{n=1}^\infty \lambda_n=1 \right\},
$$
and the functions $\sum_{n = 1}^\infty\lambda_n f_n \in \mathbb{R}^X$ above are
pointwise defined on $X$, i.e. for every $x\in X$ the absolutely convergent series $$\sum_{n = 1}^\infty\lambda_n f_n(x)$$
defines the function $\sum_{n = 1}^\infty\lambda_n f_n: X\rightarrow \R$.

Then
$$
\inf_X\left(\liminf_n f_n\right)=\inf_Y\left(\liminf_n f_n\right).
$$
\end{thm}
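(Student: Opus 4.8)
The plan is to read the statement as Simons' sup--limsup theorem in disguise and to reduce it to the supremum form. First I would replace $f_n$ by $h_n:=-f_n$. Then $\mathrm{co}_{\sigma_p}\{h_n:n\ge 1\}=-\,\mathrm{co}_{\sigma_p}\{f_n:n\ge 1\}$, so the hypothesis on $Y$ becomes: every $h\in\mathrm{co}_{\sigma_p}\{h_n:n\ge 1\}$ attains $\sup_X h$ at some point of $Y$. Since $\liminf_n f_n=-\limsup_n h_n$ and $\inf=-\sup(-\,\cdot\,)$, the desired conclusion becomes $\sup_X(\limsup_n h_n)=\sup_Y(\limsup_n h_n)$. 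The inequality $\sup_Y(\limsup_n h_n)\le\sup_X(\limsup_n h_n)$ is immediate because $Y\subseteq X$, so all the content lies in the reverse inequality. Writing $\beta:=\sup_X(\limsup_n h_n)$ (finite at each point by pointwise boundedness, which is exactly what makes every series $\sum_n\lambda_n h_n$ absolutely convergent and hence a legitimate element of $\mathrm{co}_{\sigma_p}$), it suffices to produce, for each $\eta>0$, a point $y\in Y$ with $\limsup_n h_n(y)\ge\beta-\eta$.

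The engine is Simons' construction of a single $\sigma$-convex combination whose mass sits arbitrarily far out in the tail. Fixing $\eta>0$ and a summable sequence of tolerances, I would build inductively a strictly increasing sequence $m_1<m_2<\cdots$ and finite convex blocks $g_k\in\mathrm{co}\{h_n:n\ge m_k\}$, each chosen greedily so as to nearly realise the relevant tail value of $\limsup_n h_n$ near $\beta$, together with weights $\lambda_k>0$, $\sum_k\lambda_k=1$. The combination $g:=\sum_k\lambda_k g_k$ then lies in $\mathrm{co}_{\sigma_p}\{h_n:n\ge 1\}$ and satisfies $\sup_X g\ge\beta-\eta$. Applying the boundary hypothesis to this particular $g$ furnishes $y\in Y$ with $g(y)=\sup_X g\ge\beta-\eta$. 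The remaining step is to upgrade this statement about the weighted average $g(y)$ to one about $\limsup_n h_n(y)$: since $g_k$ is supported on indices $\ge m_k$ with $m_k\uparrow\infty$, a value $g(y)$ close to $\beta$ should force $g_k(y)$, and hence $\sup_{n\ge m_k}h_n(y)$, to remain close to $\beta$ for arbitrarily large $k$, and letting $k\to\infty$ would give $\limsup_n h_n(y)\ge\beta-\eta'$, as required.

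The main obstacle is precisely this last transfer. The attainment point $y$ is produced only after $g$ has been fixed and depends on $g$, so one cannot simply arrange in advance that the late blocks are large at $y$; worse, for any fixed weight sequence a large average $g(y)$ could in principle be carried entirely by finitely many early blocks, since the initial weights $\sum_{k<K}\lambda_k$ tend to $1$. Defeating this concentration is the technical heart of the theorem and is what forces the construction to be genuinely adaptive: each new block $m_{k+1}$ (and, if needed, the allocation of the remaining weight) must be chosen after the previous ones so as to neutralise any attempt of the supremum to be realised on an early block, the summable tolerances guaranteeing that the cumulative error stays below $\eta$. Once the induction is organised in this way --- this is the delicate bookkeeping I would expect to spend most effort on --- the limiting argument of the previous paragraph goes through and yields the reverse inequality, completing the proof; note that pointwise (rather than uniform) boundedness is harmless here, since every estimate above is carried out at the single point $y$.
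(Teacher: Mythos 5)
The paper does not actually prove this theorem: it is quoted from Theorem~3 of \cite{ori-rui2} and Corollary~10.6 of \cite{cas-ori-rui}, so there is no internal proof to compare against. Judged on its own, your proposal correctly identifies the statement as Simons' sup--limsup theorem after the substitution $h_n=-f_n$, correctly reduces everything to the inequality $\sup_Y(\limsup_n h_n)\ge\sup_X(\limsup_n h_n)$, and correctly locates where the difficulty lies. But it stops exactly there: the inductive construction that you describe as ``delicate bookkeeping I would expect to spend most effort on'' \emph{is} the theorem, and you never specify the selection rule for the blocks, so nothing has been proved. Moreover, the rule you do indicate --- choosing each $g_k$ ``greedily so as to nearly realise the relevant tail value of $\limsup_n h_n$ near $\beta$'' --- is the wrong one: maximising the blocks (say at a near-optimal point $x_0$) yields $\sup_X g\ge\beta-\eta$ but gives no control at the attainment point $y$. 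In Simons' argument the two requirements are met by different devices: one first extracts a subsequence $(h_{n_j})$ with $h_{n_j}(x_0)>\beta-\eta$ for \emph{all} $j$, so that every $\sigma$-convex combination of it is automatically $\ge\beta-\eta$ at $x_0$; then, within the tail convex hulls of that subsequence, each block $g_k$ is chosen to nearly \emph{minimise} the quantity $\sup_X\bigl(\sum_{j\le k}2^{-j}g_j+2^{-k}w\bigr)$ over competitors $w$ in the $k$-th tail hull, up to a summable tolerance. It is the telescoping of these near-minimality conditions that forces, at any $y$ where $g=\sum_k 2^{-k}g_k$ attains its supremum, the estimate $g_k(y)\ge\sup_X g-\varepsilon_k$ for every $k$, hence $\limsup_n h_n(y)\ge\sup_X g-\eta$. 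Your sketch acknowledges that something ``adaptive'' is needed to ``neutralise'' concentration on early blocks, but never reconciles this with the greedy maximisation, and that reconciliation is the whole proof.

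A secondary point: your claim that pointwise (rather than uniform) boundedness ``is harmless since every estimate is carried out at the single point $y$'' misidentifies where boundedness enters. The near-minimisation step requires $\sup_X\bigl(\sum_{j\le k}2^{-j}g_j+2^{-k}w\bigr)$ to be finite; under mere pointwise boundedness this is not automatic, and in the unbounded version cited by the paper it is precisely the boundary hypothesis (every member of $\mathrm{co}_{\sigma_p}$ \emph{attains} its extremum on $X$, hence has finite extremum there) that supplies this finiteness, while pointwise boundedness at a single point gives the lower bound needed to make the infima over competitors approachable. This is the actual content of the ``unbounded'' refinement being cited, and it deserves to be said explicitly rather than waved away.
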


\begin{lem}\label{previo}
 Lat $E$ be a separable Banach space and $A\subset E$ a closed and convex subset. Let $x_0^{**}$ in $E^{**}\setminus E$ be a $\weak^*$-cluster point of $A$ and $D\subset E$ such that $0\notin \overline{{\rm co}(A\cup D)}$. Then  there is a $\weak^*$-convergente sequence $\{x_n^*\}_{n \ge 1}$ in $B_{E^*}$ and $0< \alpha < \beta$ such that
\begin{equation}\label{c1}
|\langle x^*_n, x^{**}_0\rangle| < \alpha
\end{equation}
whenever  $n\ge 1$, and
\begin{equation}\label{c2}
\lim_n \langle x^*_n ,x \rangle > \beta > \alpha
\end{equation}
for any  $x\in \overline{{\rm co}(A\cup D)}$.
\end{lem}

\begin{proof}
The compact segment line $[-x^{**}, +x^{**}]$ can be strictly separated from the closed convex set $C:=\overline{{\rm co}(A\cup D)}$ in $E^{**}$ by Hahn-Banach Theorem. So there exists a continuous linear functional $x^{***}\in B_{E^{***}}$ satisfying  $$x^{***}(C)> \beta > \alpha > x^{***}([-x^{**}_0, + x^{**}_0])$$
for some $0<\alpha<\beta$.
Let us fix a countable dense subset $\{d_n: n\in \N\}$ of $E$ and let us consider,
for every $n\ge 1$, the set
	$$
		V_n:=\big\{y^{***}\in E^{***}: |y^{***}(x^{**}_0)|<\alpha, \ |(y^{***}-x^{***})(d_i)|\leq \frac{1}{n}, \ i=1,2,\dots,n\big\}
	$$
which is a $\weak^*$-open neighborhood of $x^{***}$. Goldstein's theorem permit us to pick up  $x^*_n\in B_{E^*}\cap V_n$ for every $n\geq 1$. We will have the sequence  $\{x^*_n\}_{n\geq 1}$ which clearly satisfies
	$$
		\lim_n\langle x^*_n, d_p\rangle = \langle x^{***}, d_p \rangle
	$$
for every $p\in \N$. The density of $\{d_1,\cdots, d_n,\cdots\}$ in $E$ together the equicontinuity of the sequence
$\{x^*_n\}_{n\geq 1}$ give us the $\weak^*$-convergence of the sequence, thus we see that
    $$
		\lim_n\langle x^*_n, x\rangle = \langle x^{***}, x \rangle
	$$
for all $x\in E$, and
    $$
		|\langle x^*_n, x^{**}_0\rangle |< \alpha
	$$
for every $n\in \N$.
So the sequence $\{x^*_n\}_{n\geq 1}$ satisfies the lemma and the proof is over.

\end{proof}

\begin{thm}\label{basic}
Let $E$ be a separable Banach space and $A$ be closed and convex set. Let us fix a relatively weakly compact set of directions $D\subset B_E\setminus\{0\}$ such that $0\notin \overline{{\rm co}(A\cup D)}$ and for every $x^*\in E^*$  we have that
$$\inf \{x^*(a):a\in A\}$$ is attained whenever $x^*(d)>0 \hbox{ for every } d\in D $.
Then $A$  is $\weak^*$-closed in $E^{**}$ and $A\cap rB_E$ is a weakly compact set for every $r>0$
\end{thm}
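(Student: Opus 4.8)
The plan is to establish the first conclusion, that $A$ is $\weak^*$-closed in $E^{**}$, and to deduce the second for free. Indeed, once $\overline{A}^{\,w^*}=A$, for $r>0$ the set $A\cap rB_E=A\cap rB_{E^{**}}$ is the intersection of the $\weak^*$-closed set $A$ with the $\weak^*$-compact ball $rB_{E^{**}}$ (Alaoglu), hence $\weak^*$-compact; being contained in $E$, it is weakly compact in $E$. Since $A$ is norm-closed and convex it is weakly closed, so $\overline{A}^{\,w^*}\cap E=A$, and it suffices to rule out points of $\overline{A}^{\,w^*}\setminus E$. So I would assume, for a contradiction, that some $x_0^{**}\in E^{**}\setminus E$ is a $\weak^*$-cluster point of $A$. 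Lemma \ref{previo} then furnishes a $\weak^*$-convergent sequence $\{x_n^*\}_{n\ge1}\subset B_{E^*}$, with pointwise limit $x_\infty^*$, and scalars $0<\alpha<\beta$ satisfying \eqref{c1}--\eqref{c2}; thus $x_\infty^*(x)>\beta$ for every $x\in C:=\overline{\co(A\cup D)}$, while $|\langle x_0^{**},x_n^*\rangle|<\alpha$ for all $n$.

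I would then invoke the Inf-liminf theorem (Theorem \ref{slst}) with $X:=A\cup\{x_0^{**}\}\subset E^{**}$, with $f_n:=\langle\,\cdot\,,x_n^*\rangle|_X$ (pointwise bounded on $X$ since $\|x_n^*\|\le1$), and with $Y:=A$. This produces the desired contradiction: on $A\subset C$ one has $\liminf_n f_n(a)=\lim_n x_n^*(a)=x_\infty^*(a)>\beta$, so $\inf_Y(\liminf_n f_n)\ge\beta$, whereas \eqref{c1} gives $\liminf_n f_n(x_0^{**})\le\alpha$, so $\inf_X(\liminf_n f_n)\le\alpha<\beta$. This is incompatible with the equality $\inf_X(\liminf_n f_n)=\inf_Y(\liminf_n f_n)$ asserted by Theorem \ref{slst}.

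The crux, which I expect to be the main obstacle, is verifying the hypothesis of Theorem \ref{slst}. A $\sigma$-convex combination $g=\sum_n\lambda_n f_n$ is evaluation against $x_g^*:=\sum_n\lambda_n x_n^*\in B_{E^*}$ (a norm-convergent series). As $x_g^*$ is $\weak^*$-continuous and $A$ is $\weak^*$-dense in $\overline{A}^{\,w^*}\ni x_0^{**}$, we get $\langle x_0^{**},x_g^*\rangle\ge\inf_A x_g^*$, so $\inf_X g=\inf_A x_g^*$; hence $g$ attains its infimum over $X$ at a point of $Y=A$ exactly when $\inf_A x_g^*$ is attained. By the standing assumption this holds as soon as $x_g^*(d)>0$ for every $d\in D$. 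The difficulty is that the sequence delivered by Lemma \ref{previo} need not make every such combination positive on $D$: one only knows $x_n^*(d)\to x_\infty^*(d)>\beta$, and individual terms may be negative.

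To overcome this, and here the relative weak compactness of $D$ is used decisively, I would replace $\{x_n^*\}$ by convex combinations that are strictly positive on the weakly compact set $K:=\overline{D}^{\,w}\subset C$. On the compact space $(K,\weak)$ the restrictions $x_n^*|_K$ are continuous, uniformly bounded, and converge pointwise to $x_\infty^*|_K$; by dominated convergence this is weak convergence in $C(K)$, so Mazur's theorem places $x_\infty^*|_K$ in the $\|\cdot\|_\infty$-closed convex hull of each tail $\{x_n^*|_K:n\ge m\}$. Since $x_\infty^*>\beta$ on $K$, choosing for each large $m$ a finite convex combination $z^*$ from $\{x_n^*:n\ge m\}$ with $\|z^*-x_\infty^*\|_{C(K)}<\beta/2$ forces $z^*(d)>0$ for all $d\in K\supseteq D$. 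Doing this along $m\to\infty$ yields a sequence $\{z_k^*\}\subset B_{E^*}$, each strictly positive on $D$ and with supports tending to infinity; consequently $z_k^*\to x_\infty^*$ in $\weak^*$ (so \eqref{c2} persists on $C$) and $|\langle x_0^{**},z_k^*\rangle|<\alpha$ by averaging \eqref{c1}. Now every $\sigma$-convex combination of $\{z_k^*\}$ is positive on $D$, so the attainment hypothesis applies to all of them, the hypothesis of Theorem \ref{slst} holds for $\{z_k^*\}$, and the contradiction of the second paragraph goes through verbatim with $\{z_k^*\}$ in place of $\{x_n^*\}$. This rules out $x_0^{**}$ and proves that $A$ is $\weak^*$-closed, whence the slice statement follows as explained.
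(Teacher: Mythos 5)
Your proof is correct and follows essentially the same route as the paper: Lemma \ref{previo}, then convex combinations of tails made uniformly positive on $D$ via the weak compactness of $D$, then the Inf-liminf Theorem \ref{slst} applied with $Y=A$ to reach the contradiction $\alpha\ge\beta$. The only cosmetic difference is that you justify the positive convex blocks by weak convergence in $C(\overline{D}^{\,w})$ plus Mazur's theorem, where the paper invokes the Mackey topology $\tau(E^*,E)$ — two standard phrasings of the same fact.
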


\begin{proof}
If $A\cap B_E$ is not weakly relatively compact there is a $\weak^*$-cluster point $x^{**}_0 \in E^{**}\setminus E$ of $A\cap B_E$.
Since $0\notin \overline{{\rm co}((A\cap B_E)\cup D)}$ our Lemma \ref{previo} applies to provide us with a $\weak^*$-convergent sequence $\{x_n^*\}_{n \ge 1}$ to $x^*_0$ in $B_{E^*}$ and numbers $\beta> \alpha >0$ satisfying \eqref{c1} and  \eqref{c2} for $\overline{{\rm co}((A\cap B_E) \cup D)}$. Since the set of directions $D$ is weakly relatively compact and we have
$$x^*_0\in \overline{{\rm co}\{x_n^*: n\in \N\}}^{\tau(E^*,E)}$$
where $\tau(E^*,E)$ is the Mackey topology of the dual pair $\langle E^*, E\rangle$, i.e the topology of uniform convergence on weakly compact convex subsets of $E$, it follows that $x^*_0\in \overline{{\rm co}\{x_n^*: n\geq m\}}^{p_D}$ for all $m\in \N$ where $p_D$ is the seminorm $p_D(x^*):=\sup \{|\langle x^*, d\rangle |: d\in D\}$.
So we can find a sequence of finite subsets of integers $(F_n)$, and convex combinations of scalars 
$\sum_{i \in F_n} \lambda_i^n =1, 0< \lambda_i^n\leq 1$ such that the sequence $y^*_n:=\sum_{i\in F_n}\lambda_i^n x^*_i$ converges uniformly to $x^*_0$ on $D$. Since $x^*_0(d)>\beta$ for all $d\in D$ by \eqref{c2} we can and do assume $y^*_n(d)> \beta > 0$ for all $d\in D$ and $n\in \N$. Thus $y^*$ attains its infimum on $A$ for every $y^* \in {\rm co}_{\sigma_p}\{y^*_n:n \geq 1\}$.
Inf-liminf Theorem \ref{slst} can be applied here to obtain that
$$
\inf_{\overline{A}^{\sigma(E^{**}, E^*)}}\liminf_n y^*_n = \inf_{A}\liminf_n y^*_n
$$
which contradicts \eqref{c1} and \eqref{c2} since both inequalities are valid for $(y^*_n)$ instead of $(x^*_n)$. Indeed
we have:
$$\inf_{A}\liminf_n y^*_n=\inf_{A}\lim_n y^*_n=\inf_A x_0^{*}\geq \beta$$
and
$$\inf_{\overline{A}^{\sigma(E^{**}, E^*)}}\liminf_n y^*_n\leq \liminf_n y^*_n(x^{**}_0)\leq \alpha < \beta$$
\end{proof}

\begin{rem}
Let us remark here that Theorem A.1 in \cite{jou-sch-tou} is a consequence of the former result in separable Banach spaces.
Indeed, if we set $A$ equal to the epigraph of the penalty function  $V$ there, it is enough to consider the singleton set $D=\{0,1\}$
to get the result. 
\end{rem}

\section{Conic James' construction} 

We present in this section the main construction for our proof of Theorem \ref{MainConic}.
A carefull analysis of  ideas of R.C. James \cite{jam} lead us to  find an appropiate (conic-sided) non attaining linear form.

\begin{thm}\label{ori}
Let $A$ be a convex bounded subset of a Banach space $E$ such
that the set
$$
\{a^{**}\in \overline{A}^{w^*}\backslash E : \hbox{there exists }
\{a_n\}_{n\ge 1} \hbox{ in } A \hbox{ with } a^{**}\in
\overline{\{a_n:n\ge 1\}}^{w^*} \}
$$
is non--void. Let us fix a convex weakly compact subset $D$ of $E$ 
which doest not contains the origin. 

Then there is a sequence
$\left\{x_n^*\right\}_{n\ge 1}$ in $B_{E^*}$ and $g_0^*\in
\hbox{co}_{\sigma}\{x_n^*:n\ge 1\}$ such that for all $h\in
\ell_\infty (A)$ satisfying that for all $a\in A,$
$$
\liminf_{n\ge 1}x_n^*(a) \le h(a) \le\limsup_{n\ge 1}x_n^*(a),
$$
we have that
$$
g_0^*- h\hbox{ does not attain its supremum on } A.
$$
and $( g_0^*- h)(d)>0$ for every $d\in D$
\end{thm}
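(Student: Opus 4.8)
The plan is to run the classical R.C.\ James inductive construction attached to the given cluster point, while forcing every functional produced to stay uniformly positive on the cone of directions~$D$. Since $D$ is convex, weakly compact and $0\notin D$, Hahn--Banach separation in the weak topology yields $f\in B_{E^*}$ and $\delta>0$ with $f(d)\ge 2\delta$ for every $d\in D$. Fix a sequence $\{a_n\}_{n\ge1}$ in $A$ with a $w^*$-cluster point $a^{**}\in\overline{A}^{w^*}\setminus E$. The James scheme then builds inductively functionals $x_n^*\in B_{E^*}$ obeying a ``staircase'' relation with the points $a_m$ and with $a^{**}$ (large values on the already-fixed indices, negligible values on the remaining ones), together with a threshold $\theta>0$; the point is to superimpose on this the requirement that the sequence be well behaved on $D$. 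I would reserve a head term proportional to $f$, keep every $x_n^*$ nonnegative on $D$, and keep the tail functionals with \emph{small} values on $D$. Because $D$ is weakly compact I may then pass to convex blocks that converge uniformly on $D$, exactly as the Mackey-topology argument in the proof of Theorem~\ref{basic} permits; after relabelling I obtain $\{x_n^*\}\subset B_{E^*}$ with $\limsup_n x_n^*(d)<\delta$ uniformly on $D$, and $g_0^*\in\mathrm{co}_{\sigma}\{x_n^*\}$ carrying a fixed weight $\lambda_1>0$ on the head term.

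For the non-attainment on $A$, the construction delivers $g_0^*\in\mathrm{co}_{\sigma}\{x_n^*\}$ whose supremum $\theta=\sup_A g_0^*$ is approached along $\{a_n\}$ yet satisfies $g_0^*(a)<\theta$ for \emph{every} $a\in A$, the strict inequality for all of $A$ (not merely along the sequence) being the genuine James inequality that uses the convexity of $A$. The two-sided sandwiching of $h$ is precisely what transfers this to $g_0^*-h$: the lower bound $h(a)\ge\liminf_n x_n^*(a)$ gives the pointwise control
\[
(g_0^*-h)(a)\le g_0^*(a)-\liminf_n x_n^*(a),
\]
which the construction keeps strictly below a common level $V$ for all $a\in A$, while the upper bound $h(a)\le\limsup_n x_n^*(a)$ forces $(g_0^*-h)(a_n)\to V$ along the special sequence. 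Hence the supremum $V$ is approached but never attained, and this is compatible with Simons' inequality, as it is $\sup_A\limsup_n x_n^*$ (the supremum over \emph{all} of $A$, including points near $a^{**}$), and not the values on the distinguished sequence, that remains large.

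For the conic inequality I would estimate, for $d\in D$,
\[
(g_0^*-h)(d)=g_0^*(d)-h(d)\ge \lambda_1 f(d)-\limsup_n x_n^*(d)\ge 2\lambda_1\delta-\limsup_n x_n^*(d)>0,
\]
using that the non-head terms are nonnegative on $D$, that $h(d)\le\limsup_n x_n^*(d)$, and that this $\limsup$ was driven below $2\lambda_1\delta$ uniformly on $D$. \textbf{The main obstacle} is exactly the reconciliation of the two demands placed on the \emph{same} functionals: the non-attainment on $A$ requires the staircase/escape structure relative to $a^{**}$, while strict positivity on $D$ requires the head term to dominate and the tail to be negligible on $D$. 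Showing that a single convex-block construction can satisfy both the staircase conditions on $A$ and the uniform smallness on $D$ without the two interfering is where the convexity and weak compactness of $D$ are indispensable, since it is precisely the weak compactness that supplies the uniform, Mackey-type control letting the blocks be chosen to respect $D$ while preserving the James inequalities on $A$; the delicate bookkeeping is the simultaneous choice of the finite blocks and the weight $\lambda_1$.
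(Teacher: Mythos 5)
Your outline correctly identifies the two demands that must coexist (a James-type non-attainment structure on $A$ and uniform positivity on $D$), but it leaves both of them unproved at exactly the points where the paper does real work. First, you never explain where the quantitative ``James gap'' comes from. The paper does not separate $D$ merely from $\{0\}$: it separates $D$ from the whole segment $[-x_0^{**},x_0^{**}]$, obtaining $x^*$ with $x^*(D)\ge a>\alpha>\beta>\mu\ge x^*(x_0^{**})$ and $x^*(x_n)<\beta$, and then adds $\lambda z^{***}$ with $z^{***}\in E^{***}$ \emph{vanishing on $E$} and $z^{***}(x_0^{**})>0$. This is the decisive step: the Goldstine approximants $x_n^*$ of $x^{***}=x^*+\lambda z^{***}$ satisfy $x_0^{**}(x_n^*)>\alpha$ for all $n$ while any weak$^*$-cluster point $x_0^*$ satisfies $x_0^{**}(x_0^*)\le\beta$ (because $x_0^*(x_p)\le\beta$ for all $p$ and $x_0^{**}$ clusters the $x_p$), and it is the resulting inequality $\sup_A(h_0-x_0^*)\ge\alpha-\beta>0$ that feeds the Pryce/Ruiz Gal\'an--Simons machinery producing $g_0^*$. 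Since $z^{***}$ kills $E\supseteq D$, this perturbation does not disturb the behaviour on $D$ at all --- that is how the paper reconciles the two demands. Your proposal invokes ``the classical staircase'' as a black box and never produces this gap.

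Second, your conic estimate rests on reserving a head term proportional to $f$ carrying a fixed weight $\lambda_1>0$ in $g_0^*$, with the tail small on $D$. This is not available: $g_0^*$ is not freely chosen but is delivered by the non-attainment corollary as $g_0^*\in\hbox{co}_{\sigma}\{g_i^*\}$ with $g_i^*\in\hbox{co}_{\sigma}\{x_{n_k}^*:k\ge i\}$, i.e.\ built from \emph{tails} selected by an optimization process, so you cannot prescribe a positive weight on a designated first functional without reproving that corollary under the extra constraint. (There is also an internal inconsistency: you need $\limsup_n x_n^*(d)<2\lambda_1\delta$ but only arranged $\limsup_n x_n^*(d)<\delta$.) The paper's resolution is the opposite of yours and needs no weight bookkeeping: it makes \emph{every} $x_n^*$ strictly larger than $x^{***}=x^*$ on $D$ (by the rescaling $\hat x_n^*=c_nx_n^*$ with $c_n\downarrow 1$), so that every $\sigma$-convex combination dominates $x^{***}$ on $D$, while uniform (Mackey) convergence on the weakly compact set $D$ forces the sandwiched $h$ to coincide with $x^{***}$ there, giving $(g_0^*-h)(d)>0$. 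Your own closing paragraph concedes that the simultaneous choice is ``the main obstacle''; as written, that obstacle is not overcome, so the proof is incomplete at its core.
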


\begin{proof}

By hypothesis we can  fix a point $x_0^{**}\in\overline{A}^{w^*}\backslash E$ 
and $\{x_n\}_{n\ge 1}$ a
sequence in $A$ with $x_0^{**}\in \overline{\{x_n:n\ge
1\}}^{w^*}$.
The line segment $[-x^{**}_0, x^{**}_0]$ doest not meet the $\sigma(E, E^*)$-compact set $D$ since the origin is not inside $D$. Let us take 
$x^*\in B_{E^{*}}$ which strictly separates both sets, we will have:
$$x^*([-x^{**}_0, x^{**}_0])=[-\mu, +\mu]$$
and 
$$x^*(D)= [a,b]$$ where $\mu , a, b$ are real numbers and $[-\mu, \mu]\cap [a,b]=\emptyset.$
Without  loss of generality we assume that
$$-\mu\leq 0\leq \mu < \beta <\alpha < a < b$$
for $\alpha$ and $\beta$ real numbers choosed by the strict separation provided with the linear form $x^*$.

Without loss of generality we may and do assume the fact that 
\begin{equation}\label{08}
x^*(x_n)< \beta \hbox{ for every } n\in \N.
\end{equation}

Another Hahn--Banach application, now in the duality $(E^{**}, E^{***})$, provides us a linear form $z^{***}\in B_{E^{***}}$ such that
$z^{***}(x^{**}_0)>0$ but $z^{***}(E)=\{0\}$. Let us choose $\lambda >0$ such that 
$\lambda z^{***}(x^{**}_0)+ x^*(x^{**}_0 )> \alpha$

Let us consider the linear functional $$x^{***}:= x^*+ \lambda z^{***}$$ and we will have 
\begin{equation} \label{01}
x^{***}(D)=x^*(D)\geq a > \alpha >0,
x^{***}(x^{**}_0) = \lambda z^{***}(x^{**}_0)+ x^*(x^{**}_0 )> \alpha >0
\end{equation}

Let us remind the reader that we also have by (\ref{08})
\begin{equation} \label{02}
x^{***}(x_n)=x^{*}(x_n) <\beta 
\end{equation}  
for every $n\in \N$.
Without loss of generality, taking $\frac{1}{\|x^{***}\|}x^{***}, \frac{1}{\|x^{***}\|}\alpha$ and $\frac{1}{\|x^{***}\|}\beta$ instead of $x^{***}, \alpha$ and $\beta$ if necessary, we may assume the former inequalities 
(\ref{01}) and (\ref{02}) with
$x^{***} \in B_{E^{***}}$.

By Goldstine and Mackey-Arens theorems we can construct a bounded
sequence $\left\{x_n^*\right\}_{n\ge 1}$ in $B_{E^*}$ satisfying
$$
\hbox{for all } p\ge 1, \qquad \lim_{n\ge
1}x_n^*(x_p)=x^{***}(x_p) = x^*(x_p)  ,
$$
and
$$
\lim_{n\ge 1}x_0^{**}(x_n^*)=x^{***}(x_0^{**}),
$$
$$
\lim_{n\ge 1}(x_n^*(d))=x^{***}(d) = x^*(d)
$$
uniformly on $d\in D$.
Then
\begin{equation}\label{03}
\hbox{for all } p\ge 1, \hbox{ there is } n_p \hbox{ such that } 
\beta > x_n^*(x_p) \hbox{ for } n\geq n_p ,
\end{equation}

and without loss of generality we may assume that:
\begin{equation}\label{04}
x_0^{**}(x_n^*)> \alpha, 
x^*_n(d) > \alpha , \forall n \in \N \hbox{ and }  d \in D
\end{equation}
by 
uniform convergence on the weak compact set  $D$.

Let us note that, given a $\sigma(E^*,E)$--cluster point $x_0^*$
of the sequence $\left\{x_n^*\right\}_{n\ge 1}$, we have that
\begin{equation}\label{05}
x_0^{**}(x_0^*)\leq \beta,
\end{equation}

because $x_0^{**}\in \overline{\{x_p:p\ge 1\}}^{w^*}$ and for all
$p\ge 1, x_0^*(x_p) \leq \beta$ by (\ref{03}).

It is time now to Pryce
arguments (see \cite{gal-sim} Lemma 9,c; Proposition 10.14 and Theorem 10.15 in \cite{cas-ori-rui}):
there is a subsequence
$\left\{x^*_{n_k}:k\ge 1\right\}$ such that

$\hbox{for all } h_0\in \hbox{co}_{\sigma}\left\{ x^*_{n}: n\ge 1
\right\},$

we have:
\begin{equation}\label{06}
\sup_A\left( h_0-\limsup_{k\ge
1}x^*_{n_k}\right)= \sup_A \left(h_0-\liminf_{k\ge
1}x^*_{n_k}\right).
\end{equation}
Let us now observe that for $x_0^{**}$ we
have by (\ref{04}) that
\begin{equation}\label{07}
\forall  h_0\in \hbox{co}_{\sigma}\left\{ x^*_{n_k}: k\ge 1
\right\}, \qquad x_0^{**}(h_0) >\alpha.
\end{equation}

Let us fix a $\sigma(E^*,E)$-cluster point $x^*_0$  of  the sequence $\left\{
x^*_{n_k}:k\ge 1 \right\}$; then it follows that for
all $a \in A$,
$$
\limsup_{k\ge 1}x^*_{n_k}(a) \ge x_0^*(a) \ge \liminf_{k\ge
1}x^*_{n_k}(a)
$$
and thus, for all $a \in A$,
$$
\left( h_0(a)-\liminf_{k\ge 1}x^*_{n_k}(a) \right)\ge $$ 
$$\left(h_0-x^*_0)(a)\right) \ge \left(h_0(a)- \limsup_{k\ge 1}x^*_{n_k}(a)\right).
$$
Therefore, in view of (\ref{06}) we deduce that
$
\hbox{for all } h_0, $

$$\sup_A \left(h_0-\limsup_{k\ge
1}x^*_{n_k}\right)= 
\sup_A \left(h_0-\liminf_{k\ge
1}x^*_{n_k}\right)= \sup_A \left(h_0-x_0^*\right)
.$$
Let us observe that for $h_0\in \hbox{co}_\sigma \left\{ x^*_{n_k}:k\ge 1 \right\}$ we
have:
$$
\sup_A \left(h_0-x_0^*\right)=\sup_{\overline{A}^{w^*}}
\left(h_0-x_0^*\right)\ge \left(x_0^{**}(h_0)-x_0^{**}(x_0^{*})\right)= \langle x_0^{**}, h_0 - x_0^*  \rangle$$

and  

$$\langle x_0^{**}, h_0 - x_0^*  \rangle  > \alpha -\beta >0 , $$
by (\ref{05}) and (\ref{07}), as needed to apply  [5, Corollary 8] and obtain a sequence
$\left\{g_i^*\right\}_{i\ge 1}$ with $g_i \in \hbox{co}_\sigma \{x_{n_k}^* : k\ge
i \}$ and  $g_0^*\in \hbox{co}_{\sigma}\left\{ g_i^*:i\ge 1 \right\}$
such that for all $\tilde{g}\in \ell_\infty (A)$ with
$$\liminf_{i\ge 1} g_i^* \le \tilde{g} \le
\limsup_{i\ge 1}g_i^*\ \hbox{on } A
$$
we have that
$$
(g_0^*-\tilde{g}) \hbox{ does not attain its supremum on } A.
$$

Let us observe that $\tilde{g}$ does coincide with $x^{***}$ on $D$ and that $g_i$ are $\sigma$-convex combinations of linear forms of the sequence
$\{x^*_n\}$. If we may assume in the construction that
$x^*_n(d)> x^{***}(d)$ for all $d\in D$ and every $n \in \N$, then we should have $g_i(d) > x^{***}(d)$ for all $d\in D$, therefore $(g_0^*-\tilde{g})(d)>0$ for every $d\in D$ and  the proof should be over. In particular, for every $\sigma(E^*,E)$--cluster point of the sequence
$\left\{g_i^* \right\}_{i\ge 1}$, let us say $\tilde{g}^*,$ we have
that $g_0^*-\tilde{g}^*\in E^*$ does not attain its supremum on $A$ but $(g_0^*-\tilde{g}*)(d)>0$ for every $d\in D$ .

Let us finish with the proof of our claim:

{\it It is possible to choose $x^*_n$ above such that $$x^*_n(d)> x^{***}(d)$$ for every $d\in \N$.}

We set $$\epsilon_n:= 2
\sup\{|x^{***}(d)- x^*_n (d)|: d\in D \}$$and $$c_n:= \sup\{\frac{x^{***}(d)}{x^{***}(d)-\epsilon_n} : d\in D\}$$
that is well defined because without loss of generality we may assume that $$\epsilon_n<a=\inf x^{***}(D)$$ for every $n$.

The function $f(t,\epsilon):= \frac{t}{t-\epsilon}$ is decreasing in $t>\epsilon$ for $\epsilon >0$ fixed, and 
$\lim_{\epsilon\rightarrow 0} f(t,\epsilon) =1$ for every $t>0$ fixed too. If we set $\hat{x}^*_n:= c_n x^*_n$, we will have that
the sequence $\hat{x}^*_n$ verifies our claim since $\epsilon_n$ goes to zero and $c_n$ to $1$ when $n$ goes to infinity. Indeed the proof is over.

\end{proof}

\bigskip

Now  we are in conditions to give the proof of our Theorem \ref{MainConic}
\begin{proof}
If $\hbox{co}(A)$ is not relatively weakly compact it satisfies the hypothesis of Theorem \ref{ori}, thus we find $x^*\in E^*$ such that $x^*$ doest not attains its supremum on  $\overline{\co{(A)}}^{\|\cdot\|}$ but $x^*(D)>0$, which is a contradiction with our hypothesis and it finishes the proof.
\end{proof}

Theorem \ref{MainConic} extends Theorem 10 of \cite{cas-ori-per} for arbitrary Bananch spaces solving a question asked in that paper.
Unfortunately Theorem 2 of \cite{cas-ori-per} remains unknown for arbitrary Banach spaces.

\end{document}